\newtheorem{theorem}{Theorem}[section]
\newtheorem{lemma}[theorem]{Lemma}
\newtheorem{problem}[theorem]{Problem}
\newtheorem{corollary}[theorem]{Corollary}
\newtheorem{remark}[theorem]{Remark}
\renewcommand\proofname{\it{Proof}}
\title{\bf An Extension of  P\'{o}lya's Enumeration Theorem}
\author{Xiongfeng Zhan, Xueyi Huang\footnote{Corresponding author.} \setcounter{footnote}{-1}\footnote{\emph{Email address:} zhanxfmath@163.com (X. Zhan), huangxy@ecust.edu.cn (X. Huang).}\\[2mm]
	\small School of Mathematics, East China University of Science and Technology, \\
	\small  Shanghai 200237, P. R. China
}
\date{}
\begin{document}
	\maketitle
	\begin{abstract}
		In combinatorics, P\'{o}lya's Enumeration Theorem is a powerful tool for solving a wide range of counting problems, including the enumeration of groups, graphs, and chemical compounds. In this paper, we present an extension of P\'{o}lya's Enumeration Theorem. As an application, we derive a formula that expresses  the  $n$-th elementary symmetric polynomial in $m$ indeterminates (where $n\leq m$) as a variant of the cycle index polynomial of the symmetric group $\mathrm{Sym}(n)$. This result resolves a problem posed by Amdeberhan in 2012.

		\par\vspace{2mm}
		
		\noindent{\bfseries Keywords:} P\'{o}lya's Enumeration Theorem, Cycle index polynomial, Determinant
		\par\vspace{1mm}
		
		\noindent{\bfseries 2020 MSC:}  05A19
	\end{abstract}

	\section{Introduction}\label{section::1}
	
	P\'{o}lya's Enumeration Theorem, also known as the P\'{o}lya-Redfield Theorem, is a fundamental tool in combinatorics that ultimately generalizes Burnside's Lemma on the number of orbits of a group action on a set.  The theorem was first introduced by Redfield in 1927 \cite{Red27}. A decade later, P\'{o}lya \cite{Pol37} independently rediscovered it in 1937  and demonstrated its utility in solving diverse counting problems, including the enumeration of groups, graphs, and chemical compounds. Over time, P\'{o}lya's Enumeration Theorem has become a cornerstone of combinatorial mathematics, inspiring widespread applications and further theoretical developments \cite{PR87}.

 Let $n$ and $m$ be two fixed  positive integers. Let $F$ be a field  of characteristic zero, and let  $F[t_1,t_2,\ldots,t_n]$ (resp. $F[w_1,w_2,\ldots,w_m]$) denote the ring of polynomials over $F$ in indeterminates $t_1,\ldots,t_n$ (resp. $w_1,\ldots,w_m$).
	
	Let $X$ be an $n$-element set, and let $G$ be a permutation group on $X$, \textit{i.e.}, a subgroup of the symmetric group $\mathrm{Sym}(X)$. For $\sigma\in G$, let $c_i(\sigma)$ denote the number of cycles of length $i$ in the cycle decomposition of $\sigma$ as an element of  $\mathrm{Sym}(X)$. The \textit{cycle index polynomial} of  $G$  is defined as 
	\[
		Z_{G}(\boldsymbol{t})=\frac{1}{|G|}\sum_{\sigma\in G}Z(\sigma,\boldsymbol{t})\in F[t_1,t_2,\ldots,t_n],
	\]
	where $\boldsymbol{t}=(t_1,t_2,\ldots,t_n)$ and 	\[Z(\sigma,\boldsymbol{t})=t_1^{c_1(\sigma)}t_2^{c_2(\sigma)}\cdots t_n^{c_n(\sigma)}.
	\]

	Let $Y=\{y_1,\ldots,y_m\}$ be an $m$-element set, and let $Y^X$ denote the set of all functions from $X$ to $Y$.  Define a weight function  $w:Y\longrightarrow F[w_1,w_2,\ldots,w_m]$ by assigning  $w(y_i)=w_i$ for  each $i\in [m]$. The \textit{weight of a function} $f\in Y^X$ is defined as 
	\begin{equation*}
		W(f)=\prod_{x\in X}w(f(x)).
	\end{equation*}
Observe that $W(f)$ is a monomial of degree $n$ in $F[w_1,w_2,\ldots,w_m]$. 

For any $\sigma\in G$ and $f\in Y^X$, let $f^{\sigma}$ denote the function defined by $f^{\sigma}(x)=f(x^{\sigma})$. This action of $\sigma$ induces a permutation $\sigma'$ on $Y^X$, given by $\sigma'(f)=f^{\sigma}$. Thus, $G$ acts as a permutation group on $Y^X$. Let $f^G=\{f^{\sigma}:\sigma\in G\}$ denote the orbit of $f$, and let  $G_f=\{\sigma\in G:f^{\sigma}=f\}$ be the stabilizer subgroup of $f$. Define an equivalence relation $\sim$ on $Y^X$ by $f\sim h$ if and only if $h\in f^G$. The set of equivalence classes under $\sim$ is denoted by  $Y^X/G$. It follows directly that  $W(f)=W(h)$ whenever $f\sim h$. Therefore, the \textit{weight of an equivalence class}  $\mathcal{O}\in Y^X/G$ is well-defined as  $W(\mathcal{O})=W(f)$ for any $f\in\mathcal{O}$. 

Define the set
	$$\mathcal{P}_m([n])=\left\{\boldsymbol{k}=(k_1,k_2,\ldots,k_m):k_i\in [n]\cup\{0\} ~\mbox{for}~i\in [m],\sum_{i=1}^mk_i=n\right\}.$$ 	For any $\boldsymbol{k}=(k_1,k_2,\ldots,k_m)\in \mathcal{P}_m([n])$, write $\boldsymbol{w}^{(\boldsymbol{k})}=w_1^{k_1}w_2^{k_2}\cdots w_m^{k_m}$, and let   $a_{\boldsymbol{k}}$ denote the number of equivalence classes $\mathcal{O}\in Y^X/G$ with $W(\mathcal{O})=\boldsymbol{w}^{(\boldsymbol{k})}$. Let 
    $\boldsymbol{w}=(w_1,w_2,\ldots,w_m)$ and define  \[\tilde{\boldsymbol{w}}=\left(\sum_{i=1}^m w_i,\sum_{i=1}^m w_i^2,\ldots,\sum_{i=1}^m w_i^n\right).\]

  The weighted version of P\'{o}lya's Enumeration Theorem expresses the generating function of the numbers $a_{\boldsymbol{k}}$ as the cycle index polynomial $Z_G(\tilde{\boldsymbol{w}})$ of $G$  (see \cite{St99}).
	
\begin{theorem}[P\'{o}lya's Enumeration Theorem]\label{thm::Polya}
		\[
		\sum_{\boldsymbol{k}\in \mathcal{P}_m([n]) }a_{\boldsymbol{k}}\boldsymbol{w}^{(\boldsymbol{k})}=Z_G(\tilde{\boldsymbol{w}})=\frac{1}{|G|}\sum_{\sigma\in G}Z(\sigma,\tilde{\boldsymbol{w}}).
		\]
\end{theorem}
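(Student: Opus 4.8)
The plan is to prove the identity through a weighted refinement of Burnside's Lemma. First I would observe that the left-hand side is exactly the total weight of all orbits: grouping the orbits $\mathcal{O}\in Y^X/G$ by their common weight shows that there are $a_{\boldsymbol{k}}$ orbits of weight $\boldsymbol{w}^{(\boldsymbol{k})}$, so that
\[
\sum_{\boldsymbol{k}\in\mathcal{P}_m([n])}a_{\boldsymbol{k}}\boldsymbol{w}^{(\boldsymbol{k})}=\sum_{\mathcal{O}\in Y^X/G}W(\mathcal{O}).
\]
The whole proof therefore reduces to showing that $\sum_{\mathcal{O}}W(\mathcal{O})=\frac{1}{|G|}\sum_{\sigma\in G}Z(\sigma,\tilde{\boldsymbol{w}})$.

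Next I would establish a weighted Burnside identity, namely
\[
\sum_{\mathcal{O}\in Y^X/G}W(\mathcal{O})=\frac{1}{|G|}\sum_{\sigma\in G}\sum_{\substack{f\in Y^X\\ f^{\sigma}=f}}W(f).
\]
To prove this I would evaluate the right-hand double sum by interchanging the order of summation, rewriting it as $\sum_{f\in Y^X}|G_f|\,W(f)$, and then partitioning $Y^X$ into its orbits. Since $W$ is constant on each orbit and the orbit-stabilizer theorem gives $|G_f|=|G|/|f^G|$ for every $f$ lying in a fixed orbit $\mathcal{O}$, the total contribution of $\mathcal{O}$ collapses to $|G|\cdot W(\mathcal{O})$; dividing by $|G|$ then yields the stated identity.

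The heart of the argument, and the step I expect to be the main obstacle, is the evaluation of the inner fixed-point sum $\sum_{f^{\sigma}=f}W(f)$. The key observation is that $f^{\sigma}=f$ holds precisely when $f$ is constant on every cycle of $\sigma$, so the functions fixed by $\sigma$ correspond bijectively to arbitrary assignments of a single value in $Y$ to each cycle. A cycle of length $i$ that is assigned the value $y_j$ contributes $w_j^{i}$ to the product $W(f)$, and since the assignments to distinct cycles are independent the sum factorizes over the cycles of $\sigma$:
\[
\sum_{\substack{f\in Y^X\\ f^{\sigma}=f}}W(f)=\prod_{C}\left(\sum_{j=1}^m w_j^{|C|}\right)=\prod_{i=1}^n\left(\sum_{j=1}^m w_j^{i}\right)^{c_i(\sigma)},
\]
where the first product runs over the cycles $C$ of $\sigma$. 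Recognizing $\sum_{j=1}^m w_j^{i}$ as the $i$-th coordinate of $\tilde{\boldsymbol{w}}$, this product is exactly $Z(\sigma,\tilde{\boldsymbol{w}})$. Substituting back into the weighted Burnside identity and invoking the definition of $Z_G$ then completes the proof.
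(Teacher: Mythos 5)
Your proof is correct, and its computational core coincides with the paper's: your evaluation of the fixed-point sum $\sum_{f:f^{\sigma}=f}W(f)=Z(\sigma,\tilde{\boldsymbol{w}})$ is exactly Lemma \ref{lem::key} (your set of functions fixed by $\sigma$ is the paper's $\mathcal{F}_{\sigma}$), proved by the same cycle-by-cycle factorization. The difference lies in how the double sum over pairs $(\sigma,f)$ is organized. You apply weighted Burnside averaging directly: interchange summation, recognize the inner count as $|G_f|$, and collapse each orbit via the orbit-stabilizer theorem. The paper cannot argue this way, because it derives the stated theorem from the stronger Theorem \ref{thm::main1} (equivalently Theorem \ref{thm::main3}), in which the constant $1/|G|$ is replaced by an arbitrary function $\Delta\colon G\to F$; for general $\Delta$ the quantity $\sum_{\sigma\in G_f}\Delta(\sigma)$ does not collapse to anything orbit-intrinsic, so the paper instead keeps track of stabilizers explicitly through the partition sets $\boldsymbol{k}(X)$ and the characteristic functions of $\mathrm{Sym}(\boldsymbol{\alpha})$, and invokes the orbit-stabilizer theorem only at the very end (in the Remark following Theorem \ref{thm::main1}) when specializing $\Delta(\sigma)=1/|G|$ to recover Theorem \ref{thm::Polya}. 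In short, your route gives a shorter, self-contained proof of the classical statement itself, while the paper's route trades that directness for the generalization it needs later, in the proof of Theorem \ref{thm::main2}, where $\Delta=\mathrm{sgn}$ and Burnside-type averaging is unavailable.
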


In this paper, we extend P\'{o}lya's Enumeration Theorem as follows.
	\begin{theorem}\label{thm::main1}
		Let $\Delta:G\longrightarrow F$ be a  function. Then
		\[
			\sum_{\boldsymbol{k}\in \mathcal{P}_m([n])}\left(\sum_{f\in Y^X:W(f)=\boldsymbol{w}^{(\boldsymbol{k})}}\sum_{\sigma\in G_f}\Delta(\sigma)
\right)\boldsymbol{w}^{(\boldsymbol{k})}=\sum_{\sigma\in G}\Delta(\sigma)Z(\sigma,\tilde{\boldsymbol{w}}).
		\]
			\end{theorem}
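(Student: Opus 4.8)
The plan is to prove the identity by establishing a single auxiliary lemma and then interchanging the order of summation on the right-hand side. The key observation is that for a fixed permutation $\sigma\in G$, the quantity $Z(\sigma,\tilde{\boldsymbol{w}})$ admits a combinatorial interpretation as the total weight of all functions in $Y^X$ fixed by $\sigma$. Specifically, I would first prove the lemma
\[
\sum_{f\in Y^X:\, f^{\sigma}=f} W(f) = Z(\sigma,\tilde{\boldsymbol{w}}).
\]
To see this, note that $f^{\sigma}=f$ holds precisely when $f$ is constant on each cycle of $\sigma$, since $f(x^{\sigma})=f(x)$ for all $x$ forces $f$ to agree around each cycle. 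A cycle of length $j$ on which $f$ takes the constant value $y_i$ contributes a factor $w_i^j$ to $W(f)$; summing over the $m$ possible values gives $\sum_{i=1}^m w_i^j$ per $j$-cycle. Since the choices on distinct cycles are independent and $\sigma$ has $c_j(\sigma)$ cycles of length $j$, the product over all cycles yields $\prod_{j=1}^n\bigl(\sum_{i=1}^m w_i^j\bigr)^{c_j(\sigma)}$, which is exactly $Z(\sigma,\tilde{\boldsymbol{w}})$ by the definition of $\tilde{\boldsymbol{w}}$.

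With the lemma in hand, I would rewrite the right-hand side by substituting this expression and swapping the two finite sums:
\[
\sum_{\sigma\in G}\Delta(\sigma)Z(\sigma,\tilde{\boldsymbol{w}}) = \sum_{\sigma\in G}\Delta(\sigma)\sum_{f:\,f^{\sigma}=f}W(f) = \sum_{f\in Y^X}W(f)\sum_{\sigma\in G:\,f^{\sigma}=f}\Delta(\sigma).
\]
The inner condition $f^{\sigma}=f$ is by definition equivalent to $\sigma\in G_f$, so the inner sum becomes $\sum_{\sigma\in G_f}\Delta(\sigma)$. Finally, since every $f\in Y^X$ has weight $W(f)=\boldsymbol{w}^{(\boldsymbol{k})}$ for a unique $\boldsymbol{k}\in\mathcal{P}_m([n])$, I would partition the outer sum over $f$ according to its weight, which regroups the expression into $\sum_{\boldsymbol{k}}\bigl(\sum_{f:\,W(f)=\boldsymbol{w}^{(\boldsymbol{k})}}\sum_{\sigma\in G_f}\Delta(\sigma)\bigr)\boldsymbol{w}^{(\boldsymbol{k})}$, matching the left-hand side exactly.

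The only genuinely substantive step is the auxiliary lemma; the remainder is a mechanical reindexing of finite sums. I expect the main obstacle to be verifying the lemma carefully, in particular the claim that $\sigma$-invariant functions are exactly those constant on the cycles of $\sigma$, together with the bookkeeping that organizes the per-cycle factors into the cycle-type product. As a sanity check, specializing $\Delta\equiv 1$ should recover Theorem~\ref{thm::Polya}: the inner sum collapses to $|G_f|$, and the orbit--stabilizer relation combined with grouping functions into orbits turns the left-hand side into $|G|\sum_{\boldsymbol{k}}a_{\boldsymbol{k}}\boldsymbol{w}^{(\boldsymbol{k})}$, while the right-hand side becomes $|G|\,Z_G(\tilde{\boldsymbol{w}})$, so that dividing by $|G|$ reproduces the classical statement.
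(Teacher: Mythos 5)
Your proposal is correct and takes essentially the same approach as the paper: your auxiliary lemma is precisely the paper's Lemma~\ref{lem::key} (the set $\{f\in Y^X: f^{\sigma}=f\}$ you sum over is exactly the paper's $\mathcal{F}_{\sigma}$), and the rest is the same interchange of summation. The only difference is organizational --- the paper first recasts the statement in an equivalent partition form (Theorem~\ref{thm::main3}, with $G_f$ replaced by $\mathrm{Sym}(\boldsymbol{\alpha})\cap G$), which it needs later for the application to elementary symmetric polynomials, whereas you prove the statement directly in terms of the stabilizers $G_f$.
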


\begin{remark}\rm			
			Note that if $\Delta(\sigma) = 1/|G|$ for all $\sigma \in G$, the  Orbit-Stabilizer Theorem implies that
			\[
			\begin{aligned}
			\sum_{f\in Y^X:W(f)=\boldsymbol{w}^{(\boldsymbol{k})}}\sum_{\sigma\in G_f}\frac{1}{|G|}&=\sum_{f\in Y^X:W(f)=\boldsymbol{w}^{(\boldsymbol{k})}}\frac{|G_f|}{|G|}=\sum_{f\in Y^X:W(f)=\boldsymbol{w}^{(\boldsymbol{k})}}\frac{1}{|f^G|}\\
			&=\sum_{\mathcal{O}\in  Y^X/G:W(\mathcal{O})=\boldsymbol{w}^{(\boldsymbol{k})}}1
			=a_{\boldsymbol{k}}.
			\end{aligned}
			\]
Thus, Theorem \ref{thm::Polya} is a special case of  Theorem \ref{thm::main1}.
\end{remark}
		
As an application of Theorem \ref{thm::main1} (or its equivalent formulation presented in Theorem \ref{thm::main3}), we derive the following formula that expresses the  $n$-th elementary symmetric polynomial in $m$ indeterminates ($n\leq m$) as a variant of the cycle index polynomial of the symmetric group $\mathrm{Sym}(n):=\mathrm{Sym}([n])$. 
	\begin{theorem}\label{thm::main2}
Let $n\leq m$, and let $e_n(\boldsymbol{w})$  be the $n$-th elementary symmetric polynomial in  indeterminates $w_1,w_2,\ldots,w_m$. Then 
		\[
		e_n(\boldsymbol{w})=\frac{1}{n!}\sum_{\sigma\in \mathrm{Sym}(n)}\mathrm{sgn}(\sigma)Z(\sigma,\tilde{\boldsymbol{w}}),
		\]
		where $\mathrm{sgn}(\sigma)$ is the sign of  $\sigma$.
	\end{theorem}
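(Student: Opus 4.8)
The plan is to derive Theorem \ref{thm::main2} as a direct specialization of Theorem \ref{thm::main1}. I would take $X=[n]$, $Y=\{y_1,\ldots,y_m\}$ with weights $w(y_i)=w_i$, choose the full symmetric group $G=\mathrm{Sym}(n)$, and set the function $\Delta(\sigma)=\mathrm{sgn}(\sigma)/n!$. With these choices the right-hand side of Theorem \ref{thm::main1} becomes precisely $\frac{1}{n!}\sum_{\sigma\in\mathrm{Sym}(n)}\mathrm{sgn}(\sigma)Z(\sigma,\tilde{\boldsymbol{w}})$, the target expression. The whole problem then reduces to showing that the left-hand side collapses to $e_n(\boldsymbol{w})$.

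To evaluate the left-hand side I would fix $\boldsymbol{k}=(k_1,\ldots,k_m)\in\mathcal{P}_m([n])$ and analyze the coefficient of $\boldsymbol{w}^{(\boldsymbol{k})}$. First, the functions $f\in Y^X$ with $W(f)=\boldsymbol{w}^{(\boldsymbol{k})}$ are exactly those whose fiber over $y_i$ has cardinality $k_i$; there are $\binom{n}{k_1,\ldots,k_m}=n!/(k_1!\cdots k_m!)$ of them. Second, for each such $f$ the stabilizer $G_f$ consists of those $\sigma$ permuting the elements of $[n]$ within the fibers of $f$, so $G_f$ is the Young subgroup $\mathrm{Sym}(k_1)\times\cdots\times\mathrm{Sym}(k_m)$ sitting inside $\mathrm{Sym}(n)$. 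Crucially, the signed sum $\sum_{\sigma\in G_f}\mathrm{sgn}(\sigma)$ depends only on $\boldsymbol{k}$, not on the particular $f$.

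The key computation is then this signed sum over the stabilizer. Since the sign of a permutation in a Young subgroup factors as the product of the signs of its components, I would write $\sum_{\sigma\in G_f}\mathrm{sgn}(\sigma)=\prod_{i=1}^m\sum_{\tau\in\mathrm{Sym}(k_i)}\mathrm{sgn}(\tau)$. The elementary fact that $\sum_{\tau\in\mathrm{Sym}(k)}\mathrm{sgn}(\tau)$ equals $1$ when $k\in\{0,1\}$ and $0$ when $k\geq 2$ (the even and odd permutations balancing out for $k\geq 2$) is what drives everything: the product vanishes unless every $k_i\leq 1$. This observation—that the sign character annihilates every Young subgroup containing a transposition—is the heart of the argument and the step I expect to be the decisive one.

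Finally, I would assemble the pieces. For a squarefree $\boldsymbol{k}$ (all $k_i\in\{0,1\}$) the coefficient becomes $\frac{1}{n!}\cdot n!\cdot 1=1$, since every $k_i!=1$; for every other $\boldsymbol{k}$ it is $0$. Summing $\boldsymbol{w}^{(\boldsymbol{k})}$ over the squarefree $\boldsymbol{k}$ with $\sum_{i=1}^m k_i=n$ recovers precisely $\sum_{1\leq i_1<\cdots<i_n\leq m}w_{i_1}\cdots w_{i_n}=e_n(\boldsymbol{w})$, completing the proof. No serious obstacle remains once the sign-sum vanishing is in hand; the remaining steps are routine bookkeeping with multinomial coefficients.
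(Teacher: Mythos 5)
Your proposal is correct and follows essentially the same route as the paper: the paper specializes its equivalent formulation (Theorem \ref{thm::main3}) with $X=[n]$, $G=\mathrm{Sym}(n)$, $\Delta=\mathrm{sgn}$, and the identical key step that the sign sum over a product $\mathrm{Sym}(A_1)\times\cdots\times\mathrm{Sym}(A_m)$ vanishes whenever some $|A_i|\geq 2$. Your only cosmetic differences are working with functions and their Young-subgroup stabilizers via Theorem \ref{thm::main1} instead of partitions $\boldsymbol{\alpha}\in\boldsymbol{k}(X)$ (the paper shows these are in bijection), and absorbing the factor $1/n!$ into $\Delta$ rather than dividing at the end.
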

	
		In 2012, Amdeberhan \cite{AMD22} posed the following problem concerning a determinant expression involving traces of matrix powers.

\begin{problem}[{\cite[Problem 7.1]{AMD22}}]
Let $L$ be an $n\times n$ matrix over  $F$. Denote $t_i = \mathrm{tr}(L^i)$ as the trace of a matrix. Then, we have
\begin{equation}\label{eq::1}
\mathrm{det}(L)=\frac{1}{n!}\sum_{\sigma\in \mathrm{Sym}(n)}\mathrm{sgn}(\sigma)Z(\sigma,\boldsymbol{t}),
\end{equation}
where $\boldsymbol{t}=(t_1,t_2,\ldots,t_n)$.
It's probably not hard to employ symmetric functions in proving \eqref{eq::1}. However, the above determinant
behaves eerily similar to a cycle index formula. So, find an interpretation of \eqref{eq::1} as some appropriate
group action.
\end{problem} 
As a corollary of Theorem \ref{thm::main2}, we provide a  solution to Amdeberhan's problem.

	\begin{corollary}\label{cor::main}
		Let $L$ be an $n\times n$ matrix over  $F$ with $t_i=\mathrm{tr}(L^i)$. Then  
		\[
		\mathrm{det}(L)=\frac{1}{n!}\sum_{\sigma\in \mathrm{Sym}(n)}\mathrm{sgn}(\sigma)Z(\sigma,\boldsymbol{t}),
		\]
		where $\boldsymbol{t}=(t_1,t_2,\ldots,t_n)$.
	\end{corollary}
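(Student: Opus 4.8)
The plan is to obtain Corollary \ref{cor::main} as a direct specialization of Theorem \ref{thm::main2}, exploiting the fact that the vector $\tilde{\boldsymbol{w}}$ and the polynomial $e_n(\boldsymbol{w})$ become, respectively, the trace vector $\boldsymbol{t}$ and the determinant $\mathrm{det}(L)$ once the indeterminates $w_1,\ldots,w_m$ are evaluated at the eigenvalues of $L$. Concretely, I would take $m=n$ (permissible since the hypothesis $n\leq m$ then holds with equality) and work over a splitting field $\bar{F}\supseteq F$ in which the characteristic polynomial of $L$ factors completely.

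First I would list the eigenvalues $\lambda_1,\ldots,\lambda_n\in\bar{F}$ of $L$, counted with algebraic multiplicity. Since the eigenvalues of $L^i$ are exactly $\lambda_1^i,\ldots,\lambda_n^i$, taking traces gives $t_i=\mathrm{tr}(L^i)=\sum_{j=1}^n\lambda_j^i$ for $1\leq i\leq n$; hence the power-sum vector $\tilde{\boldsymbol{w}}$ evaluated at $w_j=\lambda_j$ is precisely $\boldsymbol{t}=(t_1,\ldots,t_n)$. At the same evaluation the $n$-th elementary symmetric polynomial becomes $e_n(\lambda_1,\ldots,\lambda_n)=\prod_{j=1}^n\lambda_j=\mathrm{det}(L)$. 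Substituting these two identities into Theorem \ref{thm::main2} yields exactly the asserted formula $\mathrm{det}(L)=\frac{1}{n!}\sum_{\sigma\in\mathrm{Sym}(n)}\mathrm{sgn}(\sigma)Z(\sigma,\boldsymbol{t})$.

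The one point requiring care -- and where I would concentrate the argument -- is the legitimacy of this substitution. Theorem \ref{thm::main2} is an identity of polynomials in $F[w_1,\ldots,w_m]$ (indeed with rational coefficients), so it remains valid under any evaluation homomorphism sending the $w_j$ to elements of the extension $\bar{F}$; thus the apparent obstacle that the $\lambda_j$ may fail to lie in $F$ is harmless. Moreover, although the individual eigenvalues live in $\bar{F}$, both sides of the resulting scalar identity, namely $\mathrm{det}(L)$ and every $t_i=\mathrm{tr}(L^i)$, already lie in $F$; hence the final equality is a genuine identity in $F$, which is what the corollary asserts. This completes the reduction of Amdeberhan's problem to Theorem \ref{thm::main2}.
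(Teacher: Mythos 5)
Your proposal is correct and takes essentially the same route as the paper's own proof: both specialize Theorem \ref{thm::main2} with $m=n$, evaluating the indeterminates $w_1,\ldots,w_n$ at the eigenvalues of $L$ so that $e_n(\boldsymbol{w})$ becomes $\det(L)$ and $\tilde{\boldsymbol{w}}$ becomes $\boldsymbol{t}$. The only difference is that you make explicit the passage to a splitting field $\bar{F}$ and the observation that the resulting scalar identity descends to $F$, a justification the paper's proof leaves implicit.
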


	\section{Proofs of main results}\label{section::2}
	
In this section, we first establish an equivalent formulation of Theorem \ref{thm::main1} and provide its complete proof. All notations and terminology from Section \ref{section::1} will remain consistent throughout.

	For every $f\in Y^X$, and for every $X'\subseteq X$, if, for every $x\in X'$, $f(x)=y$, then we write $f(X')=y$ rather than $f(X')=\{y\}$. 

Let $\boldsymbol{k} = (k_1, k_2, \ldots, k_m) \in \mathcal{P}_m([n])$.  
The set of all partitions of $X$ whose sets have cardinalities prescribed by the entries of $\boldsymbol{k}$ is denoted by $\boldsymbol{k}(X)$. Specifically,  
\[
    \boldsymbol{k}(X) = \left\{\boldsymbol{\alpha} = (A_1, \ldots, A_m) : A_i \subseteq X,\, |A_i| = k_i,\, A_i \cap A_j = \emptyset \text{ for all distinct } i, j \in [m]\right\}.
\]
For a partition $\boldsymbol{\alpha} = (A_1, \ldots, A_m) \in \boldsymbol{k}(X)$, let $\mathrm{Sym}(\boldsymbol{\alpha}) = \mathrm{Sym}(A_1) \times \cdots \times \mathrm{Sym}(A_m)$, and let $f_{\boldsymbol{\alpha}} \in Y^X$ be the function defined by setting $f(A_i) = y_i$ for all $i \in [m]$. Then $\mathrm{Sym}(\boldsymbol{\alpha})$ is a subgroup of $\mathrm{Sym}(X)$, and the mapping  
\[
    \Phi\colon \boldsymbol{k}(X) \longrightarrow \left\{f \in Y^X : W(f) = \boldsymbol{w}^{(\boldsymbol{k})}\right\},~ \boldsymbol{\alpha} \longmapsto f_{\boldsymbol{\alpha}}
\]
is a bijection. Consequently, for every $f \in Y^X$ with $W(f) = \boldsymbol{w}^{(\boldsymbol{k})}$, there exists a unique $\boldsymbol{\alpha} \in \boldsymbol{k}(X)$ such that $f = f_{\boldsymbol{\alpha}}$. In this case, we also have $G_f = G_{f_{\boldsymbol{\alpha}}} = \mathrm{Sym}(\boldsymbol{\alpha}) \cap G$. Thus, we obtain the following equivalent formulation of Theorem \ref{thm::main1}.

\begin{theorem}(Equivalent formulation of Theorem \ref{thm::main1})\label{thm::main3}
    Let $\Delta\colon G \to F$ be a function. Then
    \[
        \sum_{\boldsymbol{k} \in \mathcal{P}_m([n])} \left(\sum_{\boldsymbol{\alpha} \in \boldsymbol{k}(X)} \sum_{\sigma \in \mathrm{Sym}(\boldsymbol{\alpha}) \cap G} \Delta(\sigma)\right) \boldsymbol{w}^{(\boldsymbol{k})} = \sum_{\sigma \in G} \Delta(\sigma) Z(\sigma, \tilde{\boldsymbol{w}}).
    \]
\end{theorem}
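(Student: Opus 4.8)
The plan is to prove the identity directly by reversing the order of summation and isolating the contribution of each individual permutation. First I would move the sum over $\sigma$ to the outside, rewriting the left-hand side as
\[
\sum_{\sigma \in G} \Delta(\sigma) \left(\sum_{\boldsymbol{k} \in \mathcal{P}_m([n])} \sum_{\substack{\boldsymbol{\alpha} \in \boldsymbol{k}(X) \\ \sigma \in \mathrm{Sym}(\boldsymbol{\alpha})}} \boldsymbol{w}^{(\boldsymbol{k})}\right),
\]
since the condition $\sigma \in \mathrm{Sym}(\boldsymbol{\alpha}) \cap G$ separates into $\sigma \in G$ together with $\sigma \in \mathrm{Sym}(\boldsymbol{\alpha})$. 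It then suffices to establish, for each fixed $\sigma \in G$, the local identity
\[
\sum_{\boldsymbol{k} \in \mathcal{P}_m([n])} \sum_{\substack{\boldsymbol{\alpha} \in \boldsymbol{k}(X) \\ \sigma \in \mathrm{Sym}(\boldsymbol{\alpha})}} \boldsymbol{w}^{(\boldsymbol{k})} = Z(\sigma, \tilde{\boldsymbol{w}}),
\]
after which multiplying by $\Delta(\sigma)$ and summing over $G$ completes the argument.

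The key observation is a combinatorial description of when $\sigma$ lies in $\mathrm{Sym}(\boldsymbol{\alpha})$. Since $\mathrm{Sym}(\boldsymbol{\alpha}) = \mathrm{Sym}(A_1) \times \cdots \times \mathrm{Sym}(A_m)$ is exactly the group of permutations preserving each block setwise, $\sigma \in \mathrm{Sym}(\boldsymbol{\alpha})$ holds if and only if $\sigma(A_i) = A_i$ for every $i$, which is equivalent to each block $A_i$ being a union of cycles of $\sigma$. I would then set up a bijection between the ordered $m$-part partitions $\boldsymbol{\alpha}$ fixed by $\sigma$ (over all $\boldsymbol{k}$, so that empty blocks corresponding to $k_i = 0$ are permitted) and the functions $\phi$ assigning each cycle of $\sigma$ to one of the $m$ blocks: given $\phi$, recover $A_i$ as the union of the cycles sent to $i$; conversely, every fixed partition assigns each cycle to the unique block containing it.

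With this bijection in place, the remainder is a routine factorization. If $\sigma$ has cycles of lengths $\ell_1, \ldots, \ell_r$, then assigning cycle $j$ to block $\phi(j)$ yields block sizes $k_i = \sum_{j:\phi(j)=i} \ell_j$, and hence weight $\prod_{i} w_i^{k_i} = \prod_{j} w_{\phi(j)}^{\ell_j}$. Summing over all $\phi\colon \{1,\ldots,r\} \to [m]$ factors as
\[
\sum_{\phi} \prod_{j=1}^{r} w_{\phi(j)}^{\ell_j} = \prod_{j=1}^{r} \left(\sum_{i=1}^{m} w_i^{\ell_j}\right) = \prod_{\ell=1}^{n} \left(\sum_{i=1}^{m} w_i^{\ell}\right)^{c_\ell(\sigma)},
\]
where the last step collects cycles of equal length. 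Because the $\ell$-th entry of $\tilde{\boldsymbol{w}}$ is precisely $\sum_{i} w_i^{\ell}$, the right-hand side equals $Z(\sigma, \tilde{\boldsymbol{w}})$, which is the local identity.

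I expect the only delicate point to be the bookkeeping in the first and second steps, namely verifying that the double sweep over $\boldsymbol{k}$ and then $\boldsymbol{\alpha} \in \boldsymbol{k}(X)$ enumerates every ordered $m$-part partition of $X$ exactly once, including those with empty parts, so that the bijection with the assignment functions $\phi$ is genuinely one-to-one and onto. Once the union-of-cycles characterization is fixed, everything downstream is a direct product expansion with no further subtlety.
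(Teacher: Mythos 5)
Your proposal is correct and follows essentially the same route as the paper: your ``local identity'' for fixed $\sigma$ is exactly the paper's Lemma 2.2 combined with its equations (2)--(3), since a $\sigma$-invariant ordered partition $\boldsymbol{\alpha}$ (equivalently, your assignment $\phi$ of cycles to blocks) is the same data as a function $f\in\mathcal{F}_\sigma$ constant on the cycles of $\sigma$, and your product factorization over cycles is the same computation as the paper's proof of that lemma. The only difference is cosmetic: you exchange the sums over $\sigma$ and $(\boldsymbol{k},\boldsymbol{\alpha})$ at the outset rather than at the end, and you work with partitions directly instead of passing through the weight function on $Y^X$.
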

	
In what follows, we shall prove Theorem \ref{thm::main3}, thereby providing  a proof of Theorem \ref{thm::main1}. 

For a given   $\boldsymbol{k}=(k_1,k_2,\ldots,k_m)\in \mathcal{P}_m([n])$, the set of all functions in $Y^X$ whose sequence of preimage cardinalities for the elements of $Y$ coincides with $\boldsymbol{k}$ is denoted by $\mathrm{Im}(\boldsymbol{k})$, that is,
	$$\mathrm{Im}(\boldsymbol{k})=\{f\in Y^X: |\{x\in X:f(x)=y_i\}|=k_i ~\mathrm{for}~ 1\leq i\leq m \}.$$ 	For any $\sigma\in G$, we define $$\mathcal{F}_\sigma=\{f\in Y^X: f(x^{\sigma})=f(x) ~\mathrm{for~ every}~ x\in X\}.$$ Note that $\mathcal{F}_\sigma$ is precisely the set of functions that are constant on each cycle of  $\sigma$. 
	
	To prove Theorem \ref{thm::main3}, we require the following lemma.
	\begin{lemma}\label{lem::key}
		For any $\sigma\in G$, 
		\begin{equation*}
			\sum_{f\in \mathcal{F}_{\sigma}}W(f)=Z(\sigma,\tilde{\boldsymbol{w}}).
		\end{equation*}
	\end{lemma}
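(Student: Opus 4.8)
The plan is to exploit the observation, already noted in the excerpt, that $\mathcal{F}_\sigma$ consists exactly of the functions that are constant on each cycle of $\sigma$, so that such a function amounts to an independent choice of a single $Y$-value for each cycle, and that the weight $W(f)$ then factors as a product over these cycles. First I would fix the cycle decomposition of $\sigma$, writing its cycles as $C_1,\dots,C_r$, where $r=\sum_{j=1}^{n}c_j(\sigma)$ is the total number of cycles. Since every $f\in\mathcal{F}_\sigma$ is constant on each $C_\ell$, the function $f$ is completely determined by the tuple of values $(f(C_1),\dots,f(C_r))\in Y^r$, and conversely every such tuple arises from a unique $f\in\mathcal{F}_\sigma$; this sets up a bijection between $\mathcal{F}_\sigma$ and $Y^r\cong [m]^r$.

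Next I would rewrite the weight in factored form. If $f$ takes the constant value $y_{i_\ell}$ on the cycle $C_\ell$, then $f$ sends all $|C_\ell|$ points of $C_\ell$ to $y_{i_\ell}$, so the contribution of $C_\ell$ to $W(f)=\prod_{x\in X}w(f(x))$ is $w_{i_\ell}^{|C_\ell|}$. Multiplying over all cycles gives $W(f)=\prod_{\ell=1}^{r}w_{i_\ell}^{|C_\ell|}$, a product of per-cycle contributions indexed by the chosen value on each cycle.

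The crux is then an application of the distributive law. Under the bijection above, summing over $f\in\mathcal{F}_\sigma$ is the same as summing independently over the value assigned to each cycle, so
\[
\sum_{f\in\mathcal{F}_\sigma}W(f)
=\sum_{(i_1,\dots,i_r)\in[m]^r}\prod_{\ell=1}^{r}w_{i_\ell}^{|C_\ell|}
=\prod_{\ell=1}^{r}\left(\sum_{i=1}^{m}w_i^{|C_\ell|}\right).
\]
Finally I would group the cycles by length: there are exactly $c_j(\sigma)$ cycles of length $j$, and each contributes the identical factor $\sum_{i=1}^{m}w_i^{j}$. Collecting these factors yields $\prod_{j=1}^{n}\bigl(\sum_{i=1}^{m}w_i^{j}\bigr)^{c_j(\sigma)}$, which, recalling that $\tilde{\boldsymbol{w}}=(\sum_i w_i,\sum_i w_i^2,\dots,\sum_i w_i^n)$ and that $Z(\sigma,\boldsymbol{t})=t_1^{c_1(\sigma)}\cdots t_n^{c_n(\sigma)}$, is precisely $Z(\sigma,\tilde{\boldsymbol{w}})$.

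I expect the only genuine obstacle to be making the factorization step rigorous, namely justifying that summing the product of per-cycle contributions over all independent cycle-value assignments equals the product of the per-cycle sums. This is an instance of the generalized distributive law (expanding the product $\prod_\ell\sum_i$), and it is the single nontrivial identity in the argument; everything else is bookkeeping about the cycle decomposition and the definitions of $W$, $Z$, and $\tilde{\boldsymbol{w}}$.
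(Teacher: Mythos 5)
Your proposal is correct and follows essentially the same route as the paper's own proof: parametrize $\mathcal{F}_\sigma$ by the independent choice of a value on each cycle, factor $W(f)$ as a product of per-cycle contributions, apply the distributive law to turn the sum of products into a product of sums, and finally group the cycles by length to obtain $Z(\sigma,\tilde{\boldsymbol{w}})$. The step you flag as the only nontrivial one (the generalized distributive law) is exactly the interchange the paper performs in the middle of its displayed computation, so there is no gap.
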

	\begin{proof}
		Let $\Pi_{\sigma}:=\{X_1,X_2,\ldots,X_k\}$ be the partition of $X$ induced by the cycle decomposition of  $\sigma$. For each $l\in [n]$, $c_l(\sigma)$ is the number of parts $X_j\in\Pi_{\sigma}$ such that $|X_j|=l$.  Moreover, every function $f\in \mathcal{F}_{\sigma}$ is constant on each part $X_j$ of $\Pi_{\sigma}$.  That is, for each $j\in [k]$, there exists  $f_j\in Y$ such that $f(X_j)=f_j$.  Therefore, we can express $\mathcal{F}_{\sigma}$ as \[\mathcal{F}_{\sigma}=\{f\in Y^X: f_1,\ldots,f_k\in Y,f(X_1)=f_1,\ldots,f(X_k)=f_k\}.\] Now, we compute the sum of the weights of all functions in $\mathcal{F}_{\sigma}$:
		\begin{equation*}
			\begin{aligned}
				\sum_{f\in \mathcal{F}_{\sigma}}W(f)&=\sum_{\substack{f_1,\ldots,f_k\in Y\\ f(X_1)=f_1,\ldots,f(X_k)=f_k}}W(f)\\
				&=\sum_{\substack{f_1,\ldots,f_k\in Y\\ f(X_1)=f_1,\ldots,f(X_k)=f_k}}\prod_{j=1}^k\left(\prod_{x\in X_j} w(f(x))\right)\\
				&=\sum_{f_1,\ldots,f_k\in Y}\prod_{j=1}^k\left(w(f_j)\right)^{|X_j|}\\
				&=\prod_{j=1}^k\left(\sum_{f_j\in Y}(w(f_j))^{|X_j|}\right)\\
				&=\prod_{j=1}^k\left(\sum_{i=1}^mw_i^{|X_j|}\right)\\
				&=\prod_{l=1}^n\left(\sum_{i=1}^mw_i^l\right)^{c_l(\sigma)}\\
				&=Z(\sigma,\tilde{\boldsymbol{w}}),\\
			\end{aligned}
		\end{equation*}
		where the antepenultimate equality follows from the fact that $\{w(f_j):f_j\in Y\}=\{w_1,w_2,\ldots,w_m\}$.
		The result follows.
	\end{proof}

Now we are in a position to give the proof of Theorem \ref{thm::main3}.

\renewcommand\proofname{\it{Proof of Theorem \ref{thm::main3}}} 

	\begin{proof}
		First, observe that
		\begin{equation}\label{eq::2}
			\begin{aligned}
				\sum_{f\in \mathcal{F}_{\sigma}}W(f)&=\sum_{\boldsymbol{k}\in \mathcal{P}_m([n])}\sum_{f\in \mathcal{F}_{\sigma}\cap \mathrm{Im}(\boldsymbol{k})}W(f)\\
				&=\sum_{\boldsymbol{k}\in \mathcal{P}_m([n])}\sum_{f\in \mathcal{F}_{\sigma}\cap \mathrm{Im}(\boldsymbol{k})}\boldsymbol{w}^{(\boldsymbol{k})}\\
				&=\sum_{\boldsymbol{k}\in \mathcal{P}_m([n])} \left(\sum_{f\in \mathcal{F}_{\sigma}\cap \mathrm{Im}(\boldsymbol{k})}1\right) \boldsymbol{w}^{(\boldsymbol{k})} \\
				&=\sum_{\boldsymbol{k}\in \mathcal{P}_m([n])} \left(\sum_{\boldsymbol{\alpha}=(A_1,\ldots,A_m)\in \boldsymbol{k}(X)}\sum_{\substack{f\in \mathcal{F}_{\sigma}\\f(A_i)=y_i, i\in [m]}}1\right)  \boldsymbol{w}^{(\boldsymbol{k})}.\\
			\end{aligned}
		\end{equation}
		 Since $\mathcal{F}_{\sigma}$ consists of functions constant on the disjoint cycles of the decomposition of $\sigma$, we have that $f_{\boldsymbol{\alpha}}\in\mathcal{F}_{\sigma}$ if, and only if, $\sigma$ stabilizes the partition $\boldsymbol{\alpha}$, \textit{i.e.}, $\sigma\in \mathrm{Sym}(\boldsymbol{\alpha})$.
			Therefore, for any $\boldsymbol{\alpha}=(A_1,\ldots,A_m)\in \boldsymbol{k}(X)$, 
			\begin{equation}\label{eq::3}
				\sum_{\substack{f\in \mathcal{F}_{\sigma}\\f(A_i)=y_i, i\in [m]}}1=\sum_{\substack{f\in \mathcal{F}_{\sigma}\\f=f_{\boldsymbol{\alpha}}}}1=\chi_{\mathrm{Sym}(\boldsymbol{\alpha})}(\sigma),
		\end{equation}
		where $\chi_{\mathrm{Sym}(\boldsymbol{\alpha})}$ is the characteristic function of $\mathrm{Sym}(\boldsymbol{\alpha})$ in $\mathrm{Sym}(X)$.
		Then it follows from \eqref{eq::2} and \eqref{eq::3} that
		\begin{equation}\label{eq::4}
			\begin{aligned}
				\sum_{\sigma\in G}\Delta(\sigma)\sum_{f\in \mathcal{F}_{\sigma}}W(f)&=\sum_{\sigma\in G}\Delta(\sigma)\sum_{\boldsymbol{k}\in \mathcal{P}_m([n])}\left(\sum_{\boldsymbol{\alpha}\in \boldsymbol{k}(X)}\chi_{\mathrm{Sym}(\boldsymbol{\alpha})}(\sigma)\right)\boldsymbol{w}^{(\boldsymbol{k})}\\
				&=\sum_{\boldsymbol{k}\in \mathcal{P}_m([n])}\left(\sum_{\boldsymbol{\alpha}\in \boldsymbol{k}(X)}\sum_{\sigma\in G}\Delta(\sigma)\chi_{\mathrm{Sym}(\boldsymbol{\alpha})}(\sigma)\right)\boldsymbol{w}^{(\boldsymbol{k})}\\
				&=\sum_{\boldsymbol{k}\in \mathcal{P}_m([n])}\left(\sum_{\boldsymbol{\alpha}\in \boldsymbol{k}(X)}\sum_{\sigma\in \mathrm{Sym}(\boldsymbol{\alpha})\cap G}\Delta(\sigma)\right)\boldsymbol{w}^{(\boldsymbol{k})}.
			\end{aligned}
		\end{equation} 
		By substituting the equality from Lemma \ref{lem::key} into the left-hand side of \eqref{eq::4}, we obtain the result immediately.
	\end{proof}

As an application of Theorem  \ref{thm::main3} (or equivalently, Theorem \ref{thm::main1}), we are now ready to present the proof of Theorem \ref{thm::main2}.
	\renewcommand\proofname{\it{Proof of Theorem \ref{thm::main2}}} 
	\begin{proof}
		Let $X=[n]$, $G=\mathrm{Sym}(X)=\mathrm{Sym}(n)$, and $\Delta(\sigma)=\mathrm{sgn}(\sigma)$ for all $\sigma\in \mathrm{Sym}(n)$. By  Theorem \ref{thm::main3}, we have
		\begin{equation}\label{eq::5}
			\begin{aligned}
				\sum_{\sigma\in \mathrm{Sym}(n)}\mathrm{sgn}(\sigma)Z(\sigma,\tilde{\boldsymbol{w}})&=\sum_{\boldsymbol{k}\in \mathcal{P}_m([n])}\left(\sum_{\boldsymbol{\alpha}\in \boldsymbol{k}([n])}\sum_{\sigma\in \mathrm{Sym}(\boldsymbol{\alpha})}\mathrm{sgn}(\sigma)\right)\boldsymbol{w}^{(\boldsymbol{k})}.\\
			\end{aligned}
		\end{equation}
		For any  $\boldsymbol{k}=(k_1,\ldots,k_m)\in \mathcal{P}_m([n])$, we write $\boldsymbol{k}\preceq \boldsymbol{1}$ if $k_{j}\leq 1$ for all $j\in [m]$, and  $\boldsymbol{k}\npreceq \boldsymbol{1}$ otherwise. 
		
		Now, assume  $\boldsymbol{k}\npreceq \boldsymbol{1}$. For any  $\boldsymbol{\alpha}=(A_1,\ldots,A_m)\in \boldsymbol{k}([n])$, there exists some $i\in [m]$ such that $|A_i|\geq 2$. We can then write \[\mathrm{Sym}(\boldsymbol{\alpha})=\mathrm{Sym}(A_i)\times \prod_{j\neq i}\mathrm{Sym}(A_j).\] Thus,  
		\begin{equation*}
			\begin{aligned}
				\sum_{\sigma\in \mathrm{Sym}(\boldsymbol{\alpha})}\mathrm{sgn}(\sigma)&=\sum_{\tau\in \mathrm{Sym}(A_i),\gamma\in \prod_{j\neq i}\mathrm{Sym}(A_j)}\mathrm{sgn(\tau)}\mathrm{sgn}(\gamma)\\
				&=\left(\sum_{\tau\in \mathrm{Sym}(A_i)}\mathrm{sgn}(\tau)\right)\cdot \left(\sum_{\gamma\in \prod_{j\neq i}\mathrm{Sym}(A_j)}\mathrm{sgn}(\gamma)\right)\\
				&=0,\\
			\end{aligned}
		\end{equation*}
		where the last equality follows from the fact that  $\sum_{\tau\in \mathrm{Sym}(A_i)}\mathrm{sgn}(\tau)=0$ whenever  $|A_i|\geq 2$. Consequently, for $\boldsymbol{k}\npreceq \boldsymbol{1}$, we have  
		\[ \sum_{\boldsymbol{\alpha}\in \boldsymbol{k}([n])}\sum_{\sigma\in \mathrm{Sym}(\boldsymbol{\alpha})}\mathrm{sgn}(\sigma)=0.
		\] 
		Therefore, it suffices to  consider the case  $\boldsymbol{k}\preceq \boldsymbol{1}$ in the right-hand side of \eqref{eq::5}. In this case, $\mathrm{Sym}(\boldsymbol{\alpha})$ is  trivial since $|A_i|\leq 1$ for all $i\in [m]$. Thus, we obtain
		\begin{equation*}
			\begin{aligned}
				\sum_{\sigma\in \mathrm{Sym}(n)}\mathrm{sgn}(\sigma)Z(\sigma,\tilde{\boldsymbol{w}})
				&=\sum_{\boldsymbol{k}\in \mathcal{P}_m([n]), \boldsymbol{k}\preceq \boldsymbol{1}}\left(\sum_{\boldsymbol{\alpha}\in \boldsymbol{k}([n])}\sum_{\sigma\in \mathrm{Sym}(\boldsymbol{\alpha})}\mathrm{sgn}(\sigma)\right)\boldsymbol{w}^{(\boldsymbol{k})}\\
				&=\sum_{\boldsymbol{k}\in \mathcal{P}_m([n]),\boldsymbol{k}\preceq \boldsymbol{1}}\left(\sum_{\boldsymbol{\alpha}\in \boldsymbol{k}([n])}\sum_{\sigma=\mathrm{id}}\mathrm{sgn}(\sigma)\right)\boldsymbol{w}^{(\boldsymbol{k})}\\
				&=\sum_{\boldsymbol{k}\in \mathcal{P}_m([n]),\boldsymbol{k}\preceq \boldsymbol{1}}\left(\sum_{\boldsymbol{\alpha}\in \boldsymbol{k}([n])}1\right)\boldsymbol{w}^{(\boldsymbol{k})}\\
				&=\sum_{\boldsymbol{k}\in \mathcal{P}_m([n]),\boldsymbol{k}\preceq \boldsymbol{1}}n!\boldsymbol{w}^{(\boldsymbol{k})}\\
				&=n!\cdot e_n(\boldsymbol{w}).\\
			\end{aligned}
		\end{equation*}
		
		This completes the proof.
	\end{proof}
	
	Using Theorem \ref{thm::main2}, we can immediately deduce Corollary \ref{cor::main}.
	\renewcommand\proofname{\it{Proof of Corollary \ref{cor::main}}} 
	\begin{proof}
		Let $w_1,w_2,\ldots,w_n$ be  all the eigenvalues of $L$, and let  $ \boldsymbol{w}=(w_1,w_2,$ $\ldots,w_n)$.
		Then $e_n(\boldsymbol{w})=w_1w_2\cdots w_n$ and $\tilde{\boldsymbol{w}}=(\sum_{i=1}^n w_i,\sum_{i=1}^n w_i^2,\ldots,\sum_{i=1}^n w_i^n)=(t_1,t_2,$ $\ldots,t_n)$. By Theorem \ref{thm::main2}, we obtain
		\begin{equation*}
			w_1w_2\cdots w_n=\frac{1}{n!}\sum_{\sigma\in \mathrm{Sym}(n)}\mathrm{sgn}(\sigma)Z(\sigma,\boldsymbol{t}).
		\end{equation*}
		Since $\det(L)=w_1w_2\cdots w_n$, the result follows immediately.
	\end{proof}

	\section*{Acknowledgements}
	
	The authors are grateful to Professor T. Amdeberhan for his insightful comments and to the anonymous referees for their constructive suggestions. X. Huang was supported by National Natural Science Foundation of China (Grant No. 12471324) and Natural Science Foundation of Shanghai (Grant No. 24ZR1415500).

	\section*{Declaration of Interest Statement}
	
	The authors declare that they have no known competing financial interests or personal relationships that could have appeared to influence the work reported in this paper.

	


\begin{thebibliography}{99}\setlength{\itemsep}{0pt}
		
		
		
		\bibitem{AMD22} T. Amdeberhan, Theorems, problems and conjectures, 2012, \url{https://arxiv.org/abs/1207.4045v2}.
		
		
		

		
		\bibitem{Pol37} G. P\'{o}lya, Kombinatorische Anzahlbestimmungen f\"{u}r Gruppen, Graphen und chemische Verbindungen, Acta Math. 68 (1) (1937) 145--254.

		
		\bibitem{PR87} G. P\'{o}lya, R. C. Ronald, Combinatorial Enumeration of Groups, Graphs, and Chemical Compounds,  Springer-Verlag, New York, 1987.
		
			\bibitem{Red27} J. H. Redfield, The Theory of Group-Reduced Distributions, Amer. J. Math. 49 (3) (1927) 433--455. 
			
		\bibitem{St99} R. Stanley, Enumerative Combinatorics, vol. 2. Cambridge University Press, New York/Cambridge, 1999.
		
	\end{thebibliography}
\end{document}